\newtheorem{theorem}{Theorem}[section]
\newtheorem{definition}[theorem]{Definition}
\newtheorem{lemma}[theorem]{Lemma}
\newcommand{\RFN}{\ensuremath{{\sf RFN}}\xspace}
\newcommand{\glp}{{\ensuremath{\mathsf{GLP}}}\xspace}
\newcommand{\theory}[1]{\ensuremath{{\mathcal{#1}}}\xspace}
\newcommand{\pra}{\ensuremath{{\mathrm{PRA}}}\xspace}
\newcommand{\zfc}{\ensuremath{{\mathrm{ZFC}}}\xspace}
\newcommand{\pa}{\ensuremath{{\mathrm{PA}}}\xspace}
\newcommand{\isig}[1]{{\ensuremath {\mathrm{I}\Sigma_{#1}}}\xspace}
\newcommand{\ea}{\ensuremath{{\rm{EA}}}\xspace}
\newcommand{\la}{\langle}
\newcommand{\ra}{\rangle}
\begin{document}

\title{$\Pi^0_1$-ordinal analysis beyond first-order arithmetic}
\author{Joost J. Joosten}

\maketitle

\begin{abstract}
In this paper we give an overview of an essential part of a $\Pi^0_1$ ordinal analysis of Peano Arithmetic (\pa) as presented by Beklemishev (\cite{Beklemishev:2004}). This analysis is mainly performed within the polymodal provability logic $\glp_\omega$. 

We reflect on ways of extending this analysis beyond \pa. A main difficulty in this is to find proper generalizations of the so-called Reduction Property. The Reduction Property relates reflection principles to reflection rules. 

In this paper we prove a result that simplifies the reflection rules. Moreover, we see that for an ordinal analysis the full Reduction Property is not needed. This latter observation is also seen to open up ways for applications of ordinal analysis relative to some strong base theory.
\end{abstract}

\maketitle
%%%%%%%%%%%%%%%%%%%%%%%%%%%%%%%%%%%%%%%%%%%%%%%%%%%%

\section{Introduction}

Primitive Recursive Arithmetic (\pra) is a rather weak formal theory about the natural numbers which is among various philosophers, logicians and mathematicians held to be a good candidate for the concept of finitism (see e.g., \cite{Tait81}).
The concept of finitism tries to capture those mathematical truths and that part of mathematical reasoning which is true beyond doubt and which not uses strong assumptions on infinite mathematical entities.

Gentzen showed in his seminal paper from 1936 (\cite{Gentzen:1936}) that \pra together with some clearly non-finitist notion of  transfinite induction for easy formulas along a rather small ordinal could prove the consistency of Peano Arithmetic (\pa). 

This result can be seen as a partial realization of Hilbert's programme where finitist theories are to prove the consistency of strong mathematical theories. Of course, since G\"odel's incompleteness results we know that this program is not viable but Gentzen's consistency proof seems to clearly isolate the non-finitist part needed for such a consistency proof.

Since Gentzen's consistency proof, the scientific community has tried to calibrate the proof-strength of various theories other than \pa. The amount of transfinite induction needed in these consistency proofs is referred to as the proof-theoretic ordinal of a theory. There are various ways of defining and computing these ordinals and for most natural theories these methods all yield the same ordinals. 

Among these methods the more novel one was introduced by Beklemishev \cite{Beklemishev:2004} and is based on modal provability logics. The corresponding ordinals are referred to as the $\Pi_1^0$ ordinals. In this paper we shall sketch the method for computing these $\Pi^0_1$ ordinals. So far, the method has only been applied successfully to theories as \pa and its kin. In this paper we reflect on ways of extending this analysis beyond \pa. 

A main difficulty in this is to find proper generalizations of the so-called Reduction Property. The Reduction Property relates reflection principles to reflection rules. In this paper we prove a result that simplifies the reflection rules. Moreover, we see that for an ordinal analysis the full Reduction Property is not needed. This latter observation is also seen to open up ways for applications of ordinal analysis relative to some strong base theory.

Before we can start looking into the ordinal analysis, we must first introduce some basic knowledge concerning arithmetic and provability logics.

\section{Prerequisites}

All results in this section are given without proofs. For further background the reader is referred to standard textbooks like \cite{Boo93} or \cite{HP}.

\subsection{Arithmetic}
By the language of arithmetic we understand in this paper the language based on the symbols $\{ 0, S, +, \times, \exp, \leq, = \}$ where $\exp$ denotes the function $x\mapsto 2^x$.

Formulas of arithmetic are stratified in complexity classes as usual. Thus, $\Delta_0^0$ formulas are first-order formulas where all quantifiers refer to numbers and are bounded by some term $t$ as in $\forall \, x{\leq}t$ where of course $x\notin t$. 

We define $\Sigma^0_0 := \Pi^0_0 := \Delta_0^0$. If $\varphi(\vec x, \vec{y}) \in \Sigma^0_n$, then $\forall \vec x \ \varphi(\vec x, \vec{y}) \in \Pi^0_{n+1}$ and likewise, if $\varphi(\vec x, \vec{y}) \in \Pi^0_n$, then $\exists \vec x \ \varphi(\vec x, \vec{y}) \in \Sigma^0_{n+1}$. 

Similarly we define the hierarchies $\Pi^n_m$ where now the number of $n$th-order quantifiers is counted although in this paper we shall at most need second order quantifiers. 

By \ea we denote the arithmetic theory of \emph{Elementary Arithmetic}. This theory is formulated in the language of arithmetic. Apart from the defining axioms for the symbols in the language, \ea has an induction axiom $I_\varphi$ for each $\Delta_0^0$ formula $\varphi(\vec x, y)$ (that may contain $\exp$): 
\[
I_\varphi(\vec x) : \ \ \varphi(\vec x, 0) \wedge \forall\, y\ (\varphi(\vec x, y) \to \varphi(\vec x, y+1)) \ \to \ 
\forall y \varphi(\vec x, y).
\]

By $\ea^+$ we denote $\ea$ plus the axiom that states that super-exponentiation --the function that maps $x$ to the $x$ times iteration of $\exp$-- is a total function.

By $\isig{n}$ we denote the theory that is as \ea except that it now has induction axioms $I_\varphi$ for all formulas $\varphi (\vec x) \in \Sigma_n^0$. The theory \pa is the union of all the $\isig{n}$ in that it has induction axioms for all arithmetic formulas.

\subsection{Transfinite induction}
Greek letters will often denote ordinals and as usual we denote by $\varepsilon_0$ the supremum of $\{\omega, \omega^\omega, \omega^{\omega^\omega}, \ldots \}$. Apart from considering induction along the natural numbers we shall consider induction along transfinite orderings too. If $\la\Gamma,\prec\ra$ is a natural arithmetical representation in \ea of some ordinal we denote by ${\sf TI}[X, \Gamma]$ the collection of transfinite induction axioms for all formulas in $X$:
\[
\forall y \ \big( \forall \, y'{\prec} y\  \varphi(\vec x, y') \to  \varphi(\vec x, y)\big) \ \to \ \forall y\  \varphi(\vec x, y)
\ \ \ \ \ \mbox{ with $ \varphi(\vec x, y) \in X$.}
\]

\subsection{Formalized metamathematics}

Throughout this paper we shall use representations in arithmetic of various metamathematical notions. In particular we fix some G\"odel numbering to represent formulas and other syntactical objects in arithmetic. 

Moreover, we assume that we can represent r.e. theories in a suitable way so that we can speak of ``the formula $\varphi$ is provable in the theory $T$'' whose formalization we shall denote by $\Box_T\varphi$. Dually, we shall use the notion of ``the formula $\varphi$ is consistent with the theory $T$" which is denoted by ${\sf Con}_T(\varphi)$ or $\Diamond_T \varphi$.

If we write $\Box_T\varphi(\dot x)$ we denote by that a formula whose free variable is $x$, and so that provably for every $x$, the formula $\Box_T\varphi(\dot x)$ is equivalent to $\Box_T\varphi(\overline x)$. Here $\overline x$ denotes the numeral of $x$, that is, 
\[
\overline{x} = \overbrace{S\ldots S}^{x \mbox{ times}}0.
\]

\section{Provability logics}

The logics $\glp_\Lambda$ provide provability logics for a series of provability predicates/modalities $[\alpha]$ of increasing strength.

\begin{definition}
Let $\Lambda$ be an ordinal. By $\glp_\Lambda$ we denote the poly-modal propositional logic that has for each $\alpha < \Lambda$ a modality $[\alpha]$ (that syntactically binds as the negation symbol). The axioms of \glp are all propositional logical tautologies in this signature together with instantiations of the following schemes:
\[
\begin{array}{ll}
[\alpha ] (A \to B )  \to  ([\alpha ]A \to [\alpha] B)& \forall \alpha < \Lambda; \\
{[}  \alpha ] ([\alpha ]A \to A )  \to  [\alpha ]A & \forall \alpha < \Lambda; \\
{[}\alpha ] A  \to  [\beta]A  & \forall \alpha \leq \beta < \Lambda; \\
\la \alpha \ra A  \to  [\beta] \la \alpha \ra A & \forall \alpha < \beta < \Lambda. \\
\end{array}
\]
As always we have that $\la \alpha\ra A := \neg [\alpha ] \neg A$. The rules are Modus Ponens and a Necessitation rule for each modality below $\Gamma$, that is, $\frac{A}{[\alpha]A}$. 
\end{definition}

By $\glp$ we shall denote class-size logic which is the ``union'' of $\glp_\Lambda$ over all $\Lambda \in {\sf On}$. The closed fragment of $\glp_\Lambda$ is the set of its theorems that do not contain any propositional variables and is denoted by $\glp_\Lambda^0$. It turns out that $\glp_\Lambda^0$ is already a very rich structure that is strong enough to perform major parts of our ordinal analysis. Some privileged inhabitants of $\glp_\Lambda^0$ are the so-called \emph{worms}. They are just iterated consistency statements in $\glp_\Lambda^0$.

\begin{definition}[$S^\Lambda$]
$\top \in S^\Lambda$, and if both $A \in S^\Lambda$ and $\beta < \Lambda$, then $\la \beta \ra A \in S^\Lambda$.
\end{definition}
We can define an order $<_0$ on $S^\Lambda$ by $A <_0 B \ :\Leftrightarrow \ \glp_\Lambda^0 \vdash B\to \la 0 \ra A$. It is known (\cite{Beklemishev:2005, BeklemshevFernandezJoosten2011}) that this ordering makes $S^\Lambda$ into a well-order.

\subsection{The Reduction Property}

Japaridze (\cite{Japaridze:1988}) has shown $\glp_\omega$ to be arithmetically sound and complete if we interpret $[n]$ as ``provable by $n$ applications of the $\omega$-rule''. Ignatiev then showed in \cite{Ignatiev:1993} that this completeness result actually holds for a wide range of arithmetical readings of $[n]$. In particular, we still have completeness when reading $[n]$ as a natural formalization of ``provable in \ea together with all true $\Pi^0_n$ sentences''.

For the remainder of the section, let $[n]$ refer to this latter reading. The advantage of this reading is that certain worms can be easily linked to reflection principles and fragments of arithmetic: 

\begin{lemma}\label{theorem:ConsistencyEquivalentToReflectionEquivalentToInduction}
$\ea + \la n+2 \ra \top \ \equiv \ \ea + {\sf RFN}_{\Sigma_{n+2}}(\ea) \ \equiv \ \isig{n+1}$.
\end{lemma}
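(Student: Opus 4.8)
The statement is a chain of two equivalences, and I would treat the two links separately. The first link, $\ea+\la n{+}2\ra\top \equiv \ea+{\sf RFN}_{\Sigma_{n+2}}(\ea)$, is essentially a matter of unfolding the fixed reading of $[n{+}2]$; the second link, $\ea+{\sf RFN}_{\Sigma_{n+2}}(\ea)\equiv\isig{n+1}$, is the classical characterisation of the fragment $\isig{n+1}$ by uniform $\Sigma_{n+2}$-reflection, and it is here that the real work lies.

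For the first link, recall that $[n{+}2]\varphi$ abbreviates ``$\varphi$ is provable in $\ea$ together with all true $\Pi^0_{n+2}$ sentences''. Fixing a $\Pi^0_{n+2}$ partial truth predicate $\mathrm{Tr}_{\Pi_{n+2}}$ and using that a finite conjunction of $\Pi^0_{n+2}$ sentences is again provably $\Pi^0_{n+2}$, this reading is formalised as $[n{+}2]\varphi :\equiv \exists\sigma{\in}\Pi^0_{n+2}\,(\mathrm{Tr}_{\Pi_{n+2}}(\sigma)\wedge \Box_{\ea}(\sigma\to\varphi))$. Dualising gives $\la n{+}2\ra\top \equiv \forall\sigma{\in}\Pi^0_{n+2}\,(\mathrm{Tr}_{\Pi_{n+2}}(\sigma)\to\Diamond_{\ea}\sigma)$, i.e.\ every true $\Pi^0_{n+2}$ sentence is consistent with $\ea$. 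First I would observe that this is precisely the contrapositive of uniform $\Sigma_{n+2}$-reflection: writing ${\sf RFN}_{\Sigma_{n+2}}(\ea)$ as $\forall\tau{\in}\Sigma^0_{n+2}\,(\Box_{\ea}\tau\to\mathrm{Tr}_{\Sigma_{n+2}}(\tau))$ and using $\neg\mathrm{Tr}_{\Sigma_{n+2}}(\tau)\leftrightarrow\mathrm{Tr}_{\Pi_{n+2}}(\neg\tau)$, the two schemata coincide provably in $\ea$. This settles the first equivalence.

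For the second link I would prove the two inclusions in turn. For $\ea+{\sf RFN}_{\Sigma_{n+2}}(\ea)\vdash\isig{n+1}$, fix a $\Sigma^0_{n+1}$ formula $\varphi$ and abbreviate the induction hypothesis by $H:\equiv\varphi(0)\wedge\forall x\,(\varphi(x)\to\varphi(x{+}1))$. The key calculation is that $\ea\vdash\forall m\,\Box_{\ea}(H\to\varphi(\dot m))$, since for each $m$ the implication $H\to\varphi(\overline m)$ has an $\ea$-proof obtained by $m$ iterations of the inductive step, and this proof is produced by an elementary function of $m$ whose totality and correctness $\ea$ verifies. Since $H$ is (equivalent to) a $\Pi^0_{n+2}$ formula, $H\to\varphi(m)$ lies in $\Sigma^0_{n+2}$, so applying uniform $\Sigma_{n+2}$-reflection to it yields $\forall m\,(H\to\varphi(m))$, which is exactly $I_\varphi$. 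Conversely, for $\isig{n+1}\vdash{\sf RFN}_{\Sigma_{n+2}}(\ea)$, I would work with the $\Sigma^0_{n+2}$ partial truth predicate, whose Tarski clauses are provable using $\Sigma_{n+1}$-induction, and establish the soundness of $\ea$ on its $\Sigma^0_{n+2}$ theorems by induction on the length of $\ea$-derivations.

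The main obstacle is this last soundness argument. The naive induction ``every line of the derivation is true'' cannot be run directly, because intermediate lines of an $\ea$-proof may have arbitrarily high quantifier complexity and so escape any fixed partial truth predicate. I would resolve this in the standard way, by first controlling the complexity of the formulas occurring in the derivation (e.g.\ via partial cut-elimination so that it is bounded by the $\Sigma^0_{n+2}$ endformula), or equivalently by routing the argument through provable $\Sigma_{n+1}$-completeness together with reflection at the matching level. The delicate point throughout is to keep the induction on proofs within $\Sigma_{n+1}$, which is exactly what $\isig{n+1}$ affords and what pins down the index $n{+}2$ on the reflection principle.
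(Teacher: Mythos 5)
Your decomposition is the same as the paper's: the paper also splits the lemma into the link $\ea+\la n{+}2\ra\top \equiv \ea+{\sf RFN}_{\Sigma_{n+2}}(\ea)$ and the link $\ea+{\sf RFN}_{\Sigma_{n+2}}(\ea)\equiv\isig{n+1}$ --- but note that the paper proves neither: it calls the first ``rather easy'' and cites Beklemishev's survey, and it cites the second as a classical result of Leivant. So you are attempting strictly more than the paper does. Two of your three pieces are sound: the unfolding of $\la n{+}2\ra\top$ as ``every true $\Pi^0_{n+2}$ sentence is $\ea$-consistent'' and its identification, by contraposition, with uniform $\Sigma_{n+2}$-reflection is correct (modulo the routine, $\ea$-provable identification of the schematic form $\{\Box_\ea\sigma(\dot x)\to\sigma(x)\}$ with the truth-predicate form); and your derivation of $\isig{n+1}$ from reflection --- iterating the induction step inside the box to get $\forall m\,\Box_\ea(H\to\varphi(\dot m))$ and then reflecting on the $\Sigma_{n+2}$ formula $H\to\varphi(\dot m)$ --- is the standard Kreisel--L\'evy argument and is correct, including the complexity bookkeeping for $H$.

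The genuine gap is the converse direction $\isig{n+1}\vdash{\sf RFN}_{\Sigma_{n+2}}(\ea)$, which is exactly where Leivant's theorem lives. You correctly locate the obstacle (intermediate formulas of an $\ea$-derivation have unbounded complexity), but neither proposed remedy closes it. Partial cut-elimination is indeed available, since $\isig{n+1}\supseteq\ea^+$ proves superexponentiation total, and it bounds all formulas of the derivation by roughly the level $n{+}2$; but then the soundness statement ``every sequent of the derivation is true,'' expressed with the $\Sigma_{n+2}$/$\Pi_{n+2}$ partial truth predicates and quantifying over assignments, has complexity at least $\Pi_{n+2}$, so the induction on derivation length you invoke is $\Pi_{n+2}$-induction --- available in $\isig{n+2}$, not in $\isig{n+1}$. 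The naive truth-predicate evaluation therefore lands one level too high, and asserting that ``the delicate point is to keep the induction within $\Sigma_{n+1}$, which is exactly what $\isig{n+1}$ affords'' names the problem rather than solving it. Your alternative route, ``provable $\Sigma_{n+1}$-completeness together with reflection at the matching level,'' is circular: reflection at the matching level is the very thing to be proved. Obtaining the exact match of indices is the real content of Leivant's result and needs a finer argument (Leivant's own, or the Herbrand-style analysis in Beklemishev's survey, where the induction formula is deliberately kept at level $\Pi_{n+1}$). As written, the honest options are to supply that finer argument or to do what the paper does and cite Leivant; your sketch, left as is, does not yield a proof.
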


\begin{proof}
We shall refrain from distinguishing a modal formula from its arithmetical interpretation if the context allows us to. Thus, in this statement, $\la n+2 \ra \top$ clearly refers to the formalized statement that $\ea$ together with all true $\Pi_{n+2}^0$-formulas is consistent. 

By ${\sf RFN}_{\Sigma_{n+1}}(\ea)$ we denote the set of axioms $\{ [0]_\ea \sigma(\dot x) \to \sigma(x) \mid \sigma \in \Sigma_{n+1} \}$.

The $\ea + \la n+2 \ra \top  \equiv \ea + {\sf RFN}_{\Sigma_{n+2}}(\ea)$ equivalence is actually rather easy and can be found in \cite{BeklemishevSurvey:2005}. The remaining  equivalence $\ea + {\sf RFN}_{\Sigma_{n+2}}(\ea) \equiv \isig{n+1}$ is a classical result by Leivant \cite{Leivant:1983}.
\end{proof}

We can write ${\sf RFN}_{\Sigma_n}(EA)$ also as $\pi(x) \to \Diamond_\ea \pi (\dot x)$ for $\pi(x) \in \Pi_n^0$. This in turn can be studied as a rule rather than an implication: $\frac{\pi(x)}{\Diamond_\ea \pi (\dot x)}$. In this rule we can vary both the complexity class to which $\pi(x)$ belongs and the notion of provability used (here just $\Diamond_\ea$ which is $\la 0 \ra_\ea$) giving rise to a scala of different rules. In \cite{Beklemishev:2004, BeklemishevSurvey:2005} these rules are introduced and studied.

\begin{definition}
The Reflection rule $\Pi_m^0{-}{\sf RR}^n(\mathcal{U})$ is defined as $\frac{\pi(x)}{\la n \ra_{\mathcal{U}}\pi(\dot x)}$ where $\pi \in \Pi_m^0$.
\end{definition}
 
The following theorem is called the Reduction Property. A proof of it can be found in either one of \cite{Beklemishev:2004, BeklemishevSurvey:2005}.

\begin{theorem}\label{theorem:ReductionProperty}
The theory $\ea + {\sf RFN}_{\Sigma_{n+1}}$ is $\Pi_{n+1}^0$-conservative over $\ea + \Pi^0_{n+1}{-}{\sf RR}^n(\ea)$.
\end{theorem}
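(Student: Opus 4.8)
The plan is to establish that $S := \ea + {\sf RFN}_{\Sigma_{n+1}}(\ea)$ and the rule-closure $R := \ea + \Pi^0_{n+1}{-}{\sf RR}^n(\ea)$ prove exactly the same $\Pi^0_{n+1}$ sentences. Since $R$ is by definition contained in every theory that is closed under the reflection rule, the claim splits into a \emph{soundness} half, that $R \subseteq S$, and the genuine \emph{conservativity} half, that every $\Pi^0_{n+1}$ theorem of $S$ is already a theorem of $R$. Throughout I would use the reformulation $S \equiv \ea + \la n{+}1\ra\top$ coming from Lemma~\ref{theorem:ConsistencyEquivalentToReflectionEquivalentToInduction} after an index shift, together with three standard facts about the Ignatiev reading: that $\la n\ra_\ea\varphi$ is $\Pi^0_{n+1}$ for every $\varphi$, the provable $\Sigma_{n+1}$-completeness $\ea \vdash \sigma \to [n]_\ea\sigma$ for $\sigma \in \Sigma^0_{n+1}$, and the formalised deduction theorem $[n]_\ea\varphi \leftrightarrow \exists\psi\,(\mathrm{Tr}_{\Pi_n}(\psi) \wedge [0]_\ea(\psi \to \varphi))$.

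For soundness it is enough to show the rule is admissible in $S$, that is, $S \vdash \pi(x)$ implies $S \vdash \la n\ra_\ea\pi(\dot x)$ for $\pi \in \Pi^0_{n+1}$. I would in fact derive the uniform implication $\pi(x) \to \la n\ra_\ea\pi(\dot x)$ inside $S$; taking contrapositives with $\sigma = \neg\pi \in \Sigma^0_{n+1}$ this is the reflection principle $[n]_\ea\sigma(\dot x) \to \sigma(x)$ for the relativised predicate. Unfolding $[n]_\ea\sigma$ by the deduction theorem produces a true $\Pi_n$ sentence $\psi$ with $[0]_\ea(\psi \to \sigma(\dot x))$; as $\psi \to \sigma(\dot x)$ is again $\Sigma^0_{n+1}$, the base principle ${\sf RFN}_{\Sigma_{n+1}}(\ea)$ (in its uniform partial-truth-predicate form, to absorb the internal quantifier over $\psi$) yields $\psi \to \sigma$, and truth of $\psi$ gives $\sigma$. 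The same circle of ideas, applied to the \glp-provable domination $\ea \vdash \la n{+}1\ra\top \to \la n\ra^k\top$, shows $S \vdash \la n\ra^k\top$ for every standard $k$.

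For conservativity the goal is the converse passage, and here the key reduction I would aim for is a \emph{finite approximation lemma}: if $\ea + \la n{+}1\ra\top \vdash \pi$ with $\pi \in \Pi^0_{n+1}$, then $\ea \vdash \la n\ra^k\top \to \pi$ for some standard $k$. Granting this the theorem follows at once, for iterating the reflection rule $k$ times from the trivial $\Pi^0_{n+1}$ theorem $\top$ gives $R \vdash \la n\ra^k\top$ (each application stays within $\Pi^0_{n+1}$), and then $R \vdash \pi$ by modus ponens with the $\ea$-theorem $\la n\ra^k\top \to \pi$. To prove the lemma I would pass to the contrapositive $\ea \vdash \neg\pi \to [n{+}1]\bot$, unfold the level-$(n{+}1)$ box by the deduction theorem, and trade its single use of the true $\Pi_{n+1}$ oracle for finitely many nested calls to the true $\Pi_n$ oracle, pushing each $\Sigma_{n+1}$ premise under a $[n]_\ea$ by provable $\Sigma_{n+1}$-completeness and using a least-number argument to bound the nesting depth by a standard $k$ read off from the given derivation.

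The hard part will be exactly this last step. The obstruction is structural: as a \emph{schema} the iterates $\{\la n\ra^k\top\}_{k\in\omega}$ are strictly weaker than the single sentence $\la n{+}1\ra\top$, so no bound $k$ can work uniformly in $\pi$, and the extraction must genuinely read $k$ off the particular $S$-derivation while remaining provably correct. Carrying this out forces one to combine the formalised deduction theorem relating $[n{+}1]$ to $[n]$, provable $\Sigma_{n+1}$-completeness, and careful bookkeeping of how the surplus induction of $\isig{n}$—equivalently the full uniform reflection—distributes across finitely many consistency iterations; this is the technical heart of the Reduction Property. The remaining ingredients, namely admissibility of the rule in $S$ and the derivation of the finite iterates $\la n\ra^k\top$ inside $R$, are routine once the standard properties of the relativised provability predicates are in place.
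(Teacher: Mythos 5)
Your overall architecture is sound, and it in fact mirrors the proof that the paper defers to: note first that the paper itself gives \emph{no} proof of Theorem~\ref{theorem:ReductionProperty}, citing Beklemishev instead and remarking later that ``the main ingredient of the proof of the Reduction Property is a cut-elimination argument.'' Your soundness half (closure of $\ea + {\sf RFN}_{\Sigma_{n+1}}(\ea)$ under the rule, via the formalized deduction theorem and partial truth predicates) is a correct, standard argument, and your reduction of the conservativity half to the finite approximation lemma --- if $\ea + \la n{+}1 \ra \top \vdash \pi$ with $\pi \in \Pi^0_{n+1}$, then $\ea \vdash \la n \ra^k \top \to \pi$ for some standard $k$ --- is exactly the right decomposition; it is the content of Lemma~\ref{theorem:ReflectionRuleSimplification} and Theorem~\ref{theorem:ReductionPropertyInGLP}. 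Your observation that no uniform $k$ can work is also correct, by G\"odel's second incompleteness theorem applied to $\ea + Q^k_n(\top)$.

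The genuine gap is the step you yourself flag as hard, and the mechanism you sketch for it would not succeed; since that lemma \emph{is} the theorem, the proposal leaves the actual content unproven. Concretely: after passing to $\ea \vdash \neg\pi \to [n{+}1]\bot$ and unfolding $[n{+}1]$ by the deduction theorem, the true $\Pi^0_{n+1}$ oracle sentence witnessing $[n{+}1]\bot$ is \emph{internally} existentially quantified (and in a nonstandard model may be nonstandard); it is not a fixed formula attached to your given derivation, so there is no well-founded quantity on which a ``least-number argument'' could operate to extract a standard nesting depth $k$. Furthermore, provable $\Sigma_{n+1}$-completeness pushes true $\Sigma_{n+1}$ sentences under $[n]$, whereas the oracle sentence here is $\Pi_{n+1}$ --- on the wrong side of the duality; trading a single $\Pi_{n+1}$ oracle call for finitely many nested $\Pi_n$-oracle consistency statements is precisely what must be proved, not a bookkeeping step. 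The known arguments achieve it either by a cut-elimination/Herbrand-style analysis of the given derivation --- which is why the formalized version, Theorem~\ref{theorem:equiconsistencyReductionProperty}, lives in $\ea^+$ rather than $\ea$: the proof transformation has an unavoidable superexponential cost --- or by a model-theoretic construction of chains of $\Sigma_n$-elementary extensions. Your sketch invokes neither, and the manipulations it does invoke (deduction theorem, provable completeness, modus ponens) are all elementary and uniform, so they cannot account for a transformation that provably cannot be carried out with elementary bounds. To complete the proof you would need to import one of these two genuine arguments at exactly this point.
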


The Reduction Property can be stated and proved under more general conditions but for the current purpose this presentation suffices. At first glance it might seem a mere technicality but it implies various classical results like Parson's result that $\isig{1}$ is $\Pi^0_2$-conservative over \pra. Moreover, as we shall see, it is one of the main ingredients in our ordinal analysis.

\subsection{Simplifying the Reflection Rule}

In this subsection we shall see that we can simplify the family of reflection rules considerably. We prefer to work in a general setting here. Thus, let $[n]_{\mathcal{U}}$ be any series of provability predicates over a theory $\mathcal{U}$ that is sound for \glp.
Moreover, we have for each $n\in \omega$ that the formalized deduction theorem holds:
\[
\mathcal{U} \vdash [n]_{\mathcal{U} + \varphi} \psi \ \ \Longleftrightarrow \ \ \mathcal{U} \vdash [n]_{\mathcal{U} }( \varphi\to \psi).
\]

The Reflection Rule as studied in the \glp project has currently two parameters $n$ and $m$:

\[
\Pi_m^0{\sf - RR}^n(U+\varphi) \ := \ \frac{\psi}{\langle n\rangle(\varphi \wedge \psi)} \ \ \ \mbox{\ for $\varphi \in \Pi_m$.}
\]
In virtue of the easy lemmas below we shall see that we can drop the parameter $m$ as over $\mathcal{U}$, for $m>n$ all the versions turn out to be equivalent, and for $m\leq n$ the rule is just equivalent to the axiom $\langle n \rangle \varphi$. Thus, we propose to just speak of the ${\sf RR}^n(U+\varphi)$:
\[
{\sf RR}^n(U+\varphi) \ \ := \ \ \frac{\psi}{\langle n \rangle (\psi \wedge \varphi)}
\]
without any restriction on the complexity of $\psi$. In the remainder of this subsection, we shall assume that $\la n \ra \varphi$ is of complexity $\Pi_{n+1}^0$. However if this were not the case, the arguments go through exactly the same by replacing each occurrence of $\Pi^0_{n+1}$ by
$\widetilde {\Pi^0_{n+1}}$ where $\widetilde {\Pi^0_{n+1}}$ represents some natural complexity class to which $\la n \ra \varphi$ belongs.

\begin{definition}\label{definition:FundamentalSequence}
Let $Q^0_n(\varphi) = \langle n \rangle_{\mathcal{U}} \varphi$ and $Q^{k+1}_n(\varphi) = \langle n \rangle_{\mathcal{U}} (\varphi \wedge Q^k_n(\varphi))$.
\end{definition}

\begin{lemma}\label{theorem:ReflectionRuleSimplification}
Let $l,m,n \in \omega$ and $l >n<m$. We have that
\[
\mathcal{U} + \Pi_l-{\sf RR}^n(\mathcal{U} + \varphi) \ \equiv \ \mathcal{U} + \Pi_m-{\sf RR}^n(\mathcal{U} + \varphi) \ \equiv \ \mathcal{U} +
\{  Q^k_n (\varphi) \mid k\in \omega  \}.
\]
\end{lemma}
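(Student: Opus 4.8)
The plan is to prove the two equalities via the chain of inclusions
$\mathcal{U} + \{Q^k_n(\varphi)\mid k\in\omega\} \subseteq \mathcal{U}+\Pi_m{-}{\sf RR}^n(\mathcal{U}+\varphi) \subseteq \mathcal{U} + \{Q^k_n(\varphi)\mid k\in\omega\}$,
together with the identical chain for $l$ in place of $m$; since the second inclusion will never use the value of the complexity bound beyond the standing hypothesis that it exceeds $n$, all three theories coincide. Throughout I would read $\mathcal{U}+\Pi_m{-}{\sf RR}^n(\mathcal{U}+\varphi)$ as the least theory extending $\mathcal{U}$ and closed under the rule, and I would freely invoke the \glp-provable facts that hold in $\mathcal{U}$ by soundness, in particular the monotonicity of $\langle n\rangle$ (if $\mathcal{U}\vdash A\to B$ then $\mathcal{U}\vdash\langle n\rangle A\to\langle n\rangle B$) and the transitivity principle $\langle n\rangle\langle n\rangle A\to\langle n\rangle A$.

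For the inclusion $\mathcal{U} + \{Q^k_n(\varphi)\mid k\in\omega\}\subseteq \mathcal{U}+\Pi_m{-}{\sf RR}^n(\mathcal{U}+\varphi)$ I would generate the axioms $Q^k_n(\varphi)$ one at a time by iterating the rule. Applying it to $\top$, which is a theorem of $\mathcal{U}$ lying in $\Pi_m$, yields $\langle n\rangle(\varphi\wedge\top)$, provably equivalent to $Q^0_n(\varphi)$. Since $\langle n\rangle\varphi$ has complexity $\Pi_{n+1}^0$ and $m>n$ gives $\Pi_{n+1}^0\subseteq\Pi_m^0$, the formula $Q^0_n(\varphi)$ is itself an admissible premise, and a second application produces $Q^1_n(\varphi)=\langle n\rangle(\varphi\wedge Q^0_n(\varphi))$. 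A routine induction, using at each stage that $Q^k_n(\varphi)\in\Pi_{n+1}^0\subseteq\Pi_m^0$, then delivers every $Q^k_n(\varphi)$. This is the sole point at which the hypothesis $m>n$ (resp.\ $l>n$) is needed.

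The heart of the argument is the reverse inclusion, which I would obtain by showing that $\mathcal{U} + \{Q^k_n(\varphi)\mid k\in\omega\}$ is already closed under the rule --- indeed under the unrestricted ${\sf RR}^n(\mathcal{U}+\varphi)$, with no condition on the premise. First I would record that $Q^{k+1}_n(\varphi)\to Q^k_n(\varphi)$ is \glp-provable, by induction on $k$: the base case $\langle n\rangle(\varphi\wedge\langle n\rangle\varphi)\to\langle n\rangle\varphi$ follows from monotonicity together with transitivity, and the inductive step is pure monotonicity. Hence the $Q^k_n(\varphi)$ form a descending chain, so by compactness any theorem $\psi$ of $\mathcal{U} + \{Q^k_n(\varphi)\mid k\in\omega\}$ is a theorem of $\mathcal{U}+Q^k_n(\varphi)$ for a single large enough $k$, i.e.\ $\mathcal{U}\vdash Q^k_n(\varphi)\to\psi$. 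Monotonicity then gives $\mathcal{U}\vdash\langle n\rangle(\varphi\wedge Q^k_n(\varphi))\to\langle n\rangle(\varphi\wedge\psi)$, that is $\mathcal{U}\vdash Q^{k+1}_n(\varphi)\to\langle n\rangle(\varphi\wedge\psi)$, so that $\langle n\rangle(\varphi\wedge\psi)$ too is a theorem of the axiomatized theory.

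The main obstacle, though ultimately a short computation, is the descending-chain fact $Q^{k+1}_n(\varphi)\to Q^k_n(\varphi)$: it is what lets a single axiom absorb an entire proof, and it is where the transitivity (L\"ob) content of \glp is really used. I would emphasize that the reverse inclusion nowhere mentions the complexity of $\psi$; this is exactly the structural reason the parameter $m$ may be discarded, and it shows at one stroke that for every $l,m>n$ the rules $\Pi_l{-}{\sf RR}^n$ and $\Pi_m{-}{\sf RR}^n$ over $\mathcal{U}$ are captured by the single axiom set $\{Q^k_n(\varphi)\mid k\in\omega\}$, yielding the full three-way equivalence.
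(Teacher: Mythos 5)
Your proof is correct, and its first half is exactly the paper's: iterate the rule starting from the theorem $\top$, using that $Q^k_n(\varphi)\in\Pi^0_{n+1}\subseteq\Pi^0_m$ whenever $m>n$. For the reverse inclusion your route is organized differently. The paper argues by induction on the number of applications of the rule: if the premise $\chi$ of the last application satisfies $\mathcal{U}\vdash Q^k_n(\varphi)\to\chi$ (induction hypothesis), then necessitation and the K-axiom yield $\mathcal{U}\vdash Q^{k+1}_n(\varphi)\to\langle n\rangle(\varphi\wedge\chi)$. You instead prove the descending-chain fact $Q^{k+1}_n(\varphi)\to Q^k_n(\varphi)$, invoke compactness to reduce any theorem of $\mathcal{U}+\{Q^k_n(\varphi)\mid k\in\omega\}$ to a single axiom $Q^k_n(\varphi)$, and then show this theory is closed under the unrestricted rule by the very same necessitation-plus-K computation. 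The two arguments share their key step, but yours is tidier in one respect: the paper's inductive hypothesis ``for some single $k$'' tacitly relies on exactly the chain property you make explicit (a derivation may use several rule-derived formulas, which must all be absorbed into one $Q^k_n(\varphi)$), and your closure formulation also exhibits directly that no complexity bound on the premise is needed, which is the structural point of the whole subsection. One small inaccuracy in your commentary: the chain fact does not genuinely require the transitivity/L\"ob content of \glp. Monotonicity applied to $\varphi\wedge\langle n\rangle\varphi\to\varphi$ already gives the base case $\langle n\rangle(\varphi\wedge\langle n\rangle\varphi)\to\langle n\rangle\varphi$, and the inductive step is monotonicity as you say; your detour through $\langle n\rangle\langle n\rangle\varphi$ and $\langle n\rangle\langle n\rangle\varphi\to\langle n\rangle\varphi$ is valid but avoidable, so --- as in the paper's proof --- nothing beyond necessitation and K is used anywhere in the lemma.
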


\begin{proof}
As the complexity of $Q_n^k(\varphi)$ is $\Pi_{n+1}$ for any $k$ and $\varphi$, it is easy to see by an induction on $k$ that for any $k,m,n\in \omega$ we have 
\[
\Pi_m-{\sf RR}^n(\mathcal{U} + \varphi) \ \vdash  Q^k_n (\varphi)
\]
so that 
$\mathcal{U} + \Pi_m-{\sf RR}^n(\mathcal{U} + \varphi) \ \supseteq \ \mathcal{U} +
\{  Q^k_n (\varphi) \mid k\in \omega  \}$.

For the reverse inclusion we do induction on the number of applications of the rule $\Pi_m-{\sf RR}^n(\mathcal{U} + \varphi)$. So, suppose that for some $\chi \in \Pi_m$ we have that $\mathcal{U} + \Pi_m-{\sf RR}^n(\mathcal{U} + \varphi) \vdash \chi$. By the IH we have $\mathcal{U} \vdash Q^k_n(\varphi) \to \chi$ for some natural number $k$.
But then by necessitation we have $\mathcal{U} \vdash [n]( Q^k_n(\varphi) \to \chi)$ whence
$\mathcal{U} + Q^{k+1}_n(\varphi) \vdash \langle n \rangle_{\mathcal{U}}( \varphi \wedge \chi)$ as was to be shown.

\end{proof}

\begin{lemma}
Let $m,n \in \omega$ with $n\geq m$. We have that
\[
\mathcal{U} + \Pi_m-{\sf RR}^n(\mathcal{U} + \varphi) \ \equiv \ \mathcal{U} + \la n \ra \varphi.
\]
\end{lemma}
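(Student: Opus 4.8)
The plan is to prove the two inclusions of the equivalence separately, the inclusion $\supseteq$ being a one-line application of the rule and the inclusion $\subseteq$ requiring a short induction that mirrors the proof of Lemma~\ref{theorem:ReflectionRuleSimplification}.

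For $\mathcal{U}+\Pi_m{\sf -RR}^n(\mathcal{U}+\varphi)\supseteq\mathcal{U}+\la n\ra\varphi$ I would feed the rule its cheapest possible premise. Since $\top\in\Pi_m$ and $\mathcal{U}\vdash\top$, a single application of $\Pi_m{\sf -RR}^n(\mathcal{U}+\varphi)$ yields $\la n\ra(\varphi\wedge\top)$, which is \glp-provably equivalent to $\la n\ra\varphi$ by congruence of $\la n\ra$ under provable equivalence. Hence $\la n\ra\varphi$ is derivable on the left and this inclusion holds. For the converse $\mathcal{U}+\Pi_m{\sf -RR}^n(\mathcal{U}+\varphi)\subseteq\mathcal{U}+\la n\ra\varphi$ I would instead show that $\mathcal{U}+\la n\ra\varphi$ is already closed under the rule, arguing by induction on the number of rule applications. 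So suppose the rule is applied to some $\psi\in\Pi_m$ with $\mathcal{U}+\la n\ra\varphi\vdash\psi$ by the induction hypothesis; I must produce $\mathcal{U}+\la n\ra\varphi\vdash\la n\ra(\varphi\wedge\psi)$. The decisive point, and the only place where the hypothesis $m\leq n$ enters, is that $\Pi_m\subseteq\Sigma_{n+1}$, so provable $\Sigma_{n+1}$-completeness of $[n]$ gives $\mathcal{U}\vdash\psi\to[n]\psi$ and hence $\mathcal{U}+\la n\ra\varphi\vdash[n]\psi$. Combining this with the axiom $\la n\ra\varphi$ via the purely modal validity $[n]\psi\wedge\la n\ra\varphi\to\la n\ra(\varphi\wedge\psi)$ — an instance of $\Box A\wedge\Diamond B\to\Diamond(A\wedge B)$, which already holds in $\mathsf{K}$ and therefore under any reading sound for \glp — delivers the conclusion and closes the induction.

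The main obstacle is isolating and justifying the step $\mathcal{U}\vdash\psi\to[n]\psi$. This is not a consequence of \glp-soundness together with the formalized deduction theorem alone; it is the arithmetical input that true $\Pi_n$ (indeed $\Sigma_{n+1}$) sentences are provably available to $[n]$, i.e.\ provable $\Sigma_{n+1}$-completeness of the predicates $[n]_{\mathcal{U}}$. For the intended readings, where $\la n\ra\varphi$ is genuinely $\Pi_{n+1}^0$, this completeness is standard, and it is exactly what breaks down once $m>n$: there a $\Pi_m$ premise need not be introspectable inside $[n]$, which is precisely why for $m>n$ one obtains the properly stronger family $\{Q^k_n(\varphi)\mid k\in\omega\}$ of Lemma~\ref{theorem:ReflectionRuleSimplification} rather than the single axiom $\la n\ra\varphi$. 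Everything else is routine modal bookkeeping; and should $\la n\ra\varphi$ fail to be $\Pi_{n+1}^0$, one simply replaces $\Sigma_{n+1}$ throughout by the dual of $\widetilde{\Pi^0_{n+1}}$, as the author indicates.
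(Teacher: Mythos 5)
Your proof is correct and follows essentially the same route as the paper's: one application of the rule to $\top$ yields the inclusion $\mathcal{U}+\la n\ra\varphi\subseteq\mathcal{U}+\Pi_m{-}{\sf RR}^n(\mathcal{U}+\varphi)$, and the converse is obtained by showing $\mathcal{U}+\la n\ra\varphi$ is closed under the rule via $\psi\to[n]\psi$ for $\psi\in\Pi_m\subseteq\Sigma_{n+1}$. Your explicit observation that this last step rests on provable $\Sigma_{n+1}$-completeness of $[n]$ --- an arithmetical assumption on the interpretation rather than a consequence of \glp-soundness and the deduction theorem alone --- is a useful clarification of a point the paper leaves implicit, but the argument is the same.
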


\begin{proof}
Clearly, by one application of the $\Pi_m-{\sf RR}^n(\mathcal{U} + \varphi)$ rule we obtain $\frac{\top}{\la n\ra  \varphi}$. Thus 
\[
\mathcal{U} + \la n \ra \varphi \subseteq \mathcal{U} + \Pi_m-{\sf RR}^n(\mathcal{U} + \varphi).
\]
To prove the converse implication we show that $\mathcal{U} + \la n \ra \varphi$ is closed under the rule. Thus, reason in  $\mathcal{U} + \la n \ra \varphi$ and suppose we have proved $\psi$ with $\psi \in \Pi_m$. As $\psi \in \Sigma_{n+1}$ we have that $\psi \to [n] \psi$. We combine this with $\la n\ra \varphi$ to obtain the required $\la n\ra (\psi \wedge \varphi)$.
\end{proof}

We note that a similar argument applies to $\glp_\Lambda$ once we have fixed suitable formulas $Q^k_\alpha(\varphi)$ there and have specified complexity classes for formulas of the form $\la \alpha \ra \psi$.

\subsection{The Reduction Property revisited}

In more generality, we can define for \glp formulas --not just worms-- an ordering over \glp: 
\[
\varphi <_\alpha \psi \ \:\Leftrightarrow \ \glp \vdash \psi \to \la \alpha \ra  \varphi.
\]
With respect to these orderings, consistency statements behave very well and admit some sort of fundamental sequence. For any formula $\varphi$ we defined $Q^k_\alpha (\varphi)$ for $k\in \omega$ by $Q^0_\alpha (\varphi) := \la \alpha \ra \varphi$ and $Q^{k+1}_\alpha (\varphi) := \la \alpha \ra ( \varphi \wedge Q^k_\alpha (\varphi))$. With these formulas at hand we can state part of the fundamental sequence result to the effect that the formulas $\{ Q^k_n(\varphi)\}_{k\in \omega}$ substitutes a fundamental sequence of $\la n+1 \ra \varphi$.

\begin{lemma}\label{theorem:ReductionPropertyInclusion}
For each $k \in \omega$ we have that $\glp \vdash \la \alpha +1 \ra \varphi \to Q^k_\alpha(\varphi)$ whence also
$\glp \vdash \la \alpha +1 \ra \varphi \to \la \alpha \ra Q^k_\alpha(\varphi)$.
\end{lemma}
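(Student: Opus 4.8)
The plan is to prove the principal claim $\glp\vdash\la\alpha+1\ra\varphi\to Q^k_\alpha(\varphi)$ by induction on $k$, and to obtain the final ``whence'' assertion from it by a single weakening step. The base case $k=0$ is nothing but the diamond form of the monotonicity axiom: since $Q^0_\alpha(\varphi)=\la\alpha\ra\varphi$, the instance $\la\alpha+1\ra\varphi\to\la\alpha\ra\varphi$ is immediate. The one structural fact I would isolate first is that \emph{every} $Q^k_\alpha(\varphi)$ has $\la\alpha\ra$ as its main connective; consequently the fourth axiom scheme $\la\alpha\ra A\to[\beta]\la\alpha\ra A$, taken with $\beta=\alpha+1$, applies to it directly and yields
\[
\glp\vdash Q^k_\alpha(\varphi)\to[\alpha+1]Q^k_\alpha(\varphi),
\]
a ``persistence upward'' of the lower consistency statement $Q^k_\alpha(\varphi)$, which I will refer to as $(\star)$.

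For the inductive step I would assume $\glp\vdash\la\alpha+1\ra\varphi\to Q^k_\alpha(\varphi)$ and chain it with $(\star)$ to get $\glp\vdash\la\alpha+1\ra\varphi\to[\alpha+1]Q^k_\alpha(\varphi)$. Next I would use the elementary distribution principle, provable in any normal modal logic and hence in \glp,
\[
\la\alpha+1\ra\varphi\wedge[\alpha+1]Q^k_\alpha(\varphi)\ \to\ \la\alpha+1\ra\big(\varphi\wedge Q^k_\alpha(\varphi)\big),
\]
to combine the two pieces, obtaining $\la\alpha+1\ra\varphi\to\la\alpha+1\ra(\varphi\wedge Q^k_\alpha(\varphi))$. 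A final appeal to monotonicity in the form $\la\alpha+1\ra B\to\la\alpha\ra B$, with $B=\varphi\wedge Q^k_\alpha(\varphi)$, lowers the outer modality and produces exactly $Q^{k+1}_\alpha(\varphi)=\la\alpha\ra(\varphi\wedge Q^k_\alpha(\varphi))$, closing the induction. The second statement then costs nothing: instantiating the principal claim at index $k+1$ gives $\la\alpha+1\ra\varphi\to\la\alpha\ra(\varphi\wedge Q^k_\alpha(\varphi))$, and weakening the conjunction underneath the diamond, $\la\alpha\ra(\varphi\wedge Q^k_\alpha(\varphi))\to\la\alpha\ra Q^k_\alpha(\varphi)$, delivers $\glp\vdash\la\alpha+1\ra\varphi\to\la\alpha\ra Q^k_\alpha(\varphi)$.

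The one place that calls for the right idea, and where a natural first attempt goes astray, is the engine of the inductive step. It is tempting to try to insert a positive copy of $\la\alpha+1\ra\varphi$ underneath the outer $\la\alpha\ra$ --- that is, to prove $\la\alpha+1\ra\varphi\to\la\alpha\ra(\varphi\wedge\la\alpha+1\ra\varphi)$ and then rewrite the inner occurrence by the induction hypothesis. This does not work: Löb's axiom only ever unfolds $\la\alpha\ra\varphi$ into $\la\alpha\ra(\varphi\wedge[\alpha]\neg\varphi)$, inserting the \emph{negation} of a consistency statement, and the positive variant is in fact not \glp-derivable. The correct move is the opposite one taken above: rather than manufacturing $\la\alpha+1\ra\varphi$ inside the diamond, I use $(\star)$ --- a bare instance of the fourth axiom applied to the already assembled formula $Q^k_\alpha(\varphi)$ --- to transport the induction hypothesis under an $[\alpha+1]$-box, after which only routine normal-modal bookkeeping and one monotonicity step remain.
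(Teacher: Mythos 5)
Your proof is correct and is essentially the canonical one: the paper itself gives no proof here, deferring to Beklemishev's survey, and the argument there is precisely your induction on $k$ --- base case from monotonicity, inductive step combining the persistence axiom $\la \alpha \ra A \to [\alpha+1]\la \alpha \ra A$ applied to $Q^k_\alpha(\varphi)$ (legitimate since every $Q^k_\alpha(\varphi)$ starts with $\la \alpha \ra$) with the K-principle $\la \alpha+1 \ra A \wedge [\alpha+1]B \to \la \alpha+1 \ra (A \wedge B)$ and one final monotonicity step, followed by weakening under the diamond for the ``whence'' clause. Your side remark is also accurate: the ``positive'' unfolding $\la \alpha+1 \ra \varphi \to \la \alpha \ra (\varphi \wedge \la \alpha+1 \ra \varphi)$ is indeed not \glp-derivable (with $\varphi = \top$ it would contradict arithmetical soundness together with G\"odel's second incompleteness theorem), so the route through the fourth axiom scheme is the right one.
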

A proof of this lemma is not hard and can be found, e.g., in \cite{BeklemishevSurvey:2005}. The other half of the fundamental sequence result is in virtue of the above just recasting the Reduction Property in terms of \glp.

\begin{theorem}\label{theorem:ReductionPropertyInGLP}
$\ea + \la n+1 \ra \varphi$ is $\Pi_{n+1}$-conservative over $\ea + \{ Q^k_n(\varphi) \mid k\in \omega\}$.
\end{theorem}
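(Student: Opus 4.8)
The plan is to derive the statement from the Reduction Property (Theorem~\ref{theorem:ReductionProperty}) in a suitably general form, using the simplification of the reflection rule and the formalized deduction theorem to bring both theories into a shape to which that Property applies. First I would dispose of the trivial half: by Lemma~\ref{theorem:ReductionPropertyInclusion} we have $\glp\vdash\la n+1\ra\varphi\to Q^k_n(\varphi)$ for every $k$, so $\ea+\{Q^k_n(\varphi)\mid k\in\omega\}\subseteq\ea+\la n+1\ra\varphi$ and the weaker theory automatically agrees with the stronger one on the consequences it itself proves. The whole content therefore lies in the converse, namely that every $\pi\in\Pi_{n+1}$ with $\ea+\la n+1\ra\varphi\vdash\pi$ is already provable in $\ea+\{Q^k_n(\varphi)\mid k\in\omega\}$.

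Next I would rewrite both sides. On the right, Lemma~\ref{theorem:ReflectionRuleSimplification} (taking $m=n+1>n$) identifies $\ea+\{Q^k_n(\varphi)\mid k\in\omega\}$ with $\ea+\Pi_{n+1}{-}{\sf RR}^n(\ea+\varphi)$, so it suffices to show that $\ea+\la n+1\ra\varphi$ is $\Pi_{n+1}$-conservative over $\ea+\Pi_{n+1}{-}{\sf RR}^n(\ea+\varphi)$. On the left, the formalized deduction theorem gives $\la n+1\ra\varphi=\la n+1\ra_{\ea}\varphi\equiv\la n+1\ra_{\ea+\varphi}\top$, i.e. the consistency statement for $\ea+\varphi$ at level $n+1$, which by the evident relativization of Lemma~\ref{theorem:ConsistencyEquivalentToReflectionEquivalentToInduction} is the consistency counterpart of $\Sigma_{n+1}$-reflection for $\ea+\varphi$. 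The same deduction theorem rewrites each conclusion of the rule as $\la n\ra_\ea(\varphi\wedge\psi)=\la n\ra_{\ea+\varphi}\psi$. Thus the target is precisely the Reduction Property for the theory $\ea+\varphi$ at level $n$ — but, crucially, with the conservation taken over the base $\ea$ rather than over $\ea+\varphi$.

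This last point is where the real work lies. One cannot simply instantiate Theorem~\ref{theorem:ReductionProperty} with base theory $\ea+\varphi$: that would yield only that $(\ea+\varphi)+\la n+1\ra\varphi$ is $\Pi_{n+1}$-conservative over $(\ea+\varphi)+\{Q^k_n(\varphi)\}$, and discharging the extra axiom $\varphi$ through the deduction theorem leaves one with $\ea+\{Q^k_n(\varphi)\}\vdash\varphi\to\pi$ rather than the required $\ea+\{Q^k_n(\varphi)\}\vdash\pi$. The hard part will be precisely that $\varphi$ must never be asserted as an axiom but has to be carried inside the provability operators throughout. Concretely, I would re-run the proof of the Reduction Property with $\varphi$ threaded through every box by means of the identity $[n]_{\ea+\varphi}\chi\leftrightarrow[n]_\ea(\varphi\to\chi)$, so that the level-$(n{+}1)$ consistency of $\ea+\varphi$ is reduced, step by step, to iterated level-$n$ consistency statements $\la n\ra_\ea(\varphi\wedge\cdot)$, that is, to the formulas $Q^k_n(\varphi)$ of Definition~\ref{definition:FundamentalSequence}. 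The one thing to watch is complexity: each premise $\psi\wedge\varphi$, and hence each consistency statement produced along the way, must remain within $\Pi_{n+1}$ (or, in the general case, within the class $\widetilde{\Pi^0_{n+1}}$ of the remark above), which is exactly what guarantees that the level-$n$ rule is strong enough to absorb the reduction.

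Chaining the three reformulations then closes the argument: $\ea+\la n+1\ra\varphi$ is $\Pi_{n+1}$-conservative over $\ea+\Pi_{n+1}{-}{\sf RR}^n(\ea+\varphi)$, which by Lemma~\ref{theorem:ReflectionRuleSimplification} is the theory $\ea+\{Q^k_n(\varphi)\mid k\in\omega\}$; together with the inclusion of Lemma~\ref{theorem:ReductionPropertyInclusion} this shows that the two theories share exactly the same $\Pi_{n+1}$ theorems.
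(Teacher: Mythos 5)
Your proposal is correct and follows essentially the same route as the paper's own proof: the inclusion $\ea + \{ Q^k_n(\varphi) \mid k\in \omega\} \subseteq \ea + \la n+1 \ra \varphi$ from Lemma~\ref{theorem:ReductionPropertyInclusion}, the identification of $\ea + \{ Q^k_n(\varphi) \mid k\in \omega\}$ with $\ea + \Pi_{n+1}{-}{\sf RR}^n(\ea + \varphi)$ from Lemma~\ref{theorem:ReflectionRuleSimplification}, and then an appeal to the Reduction Property. Where you add value is in your third paragraph: you are right that Theorem~\ref{theorem:ReductionProperty} as literally stated (with $\ea$ both as the base of conservation and as the theory appearing in the reflection principle and the rule) does not apply, and right that instantiating it with base theory $\ea+\varphi$ only yields $\ea + \{Q^k_n(\varphi) \mid k \in \omega\} \vdash \varphi \to \pi$ rather than $\vdash \pi$. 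What is needed is the asymmetric, parametric form --- conservation measured over $\ea$, while the reflection principle and the rule refer to $\ea+\varphi$ inside the provability predicates --- which is precisely what the paper's earlier remark that the Reduction Property ``can be stated and proved under more general conditions'' alludes to, and what its proof silently invokes when it says the conservativity ``follows directly''. You propose to obtain this form by re-running the cut-elimination proof with $\varphi$ threaded through every box; that is indeed how it is established, but you need not redo it: the sources cited for Theorem~\ref{theorem:ReductionProperty} prove the Reduction Property in exactly this parametric form (for an arbitrary sentence $\varphi$, equivalently for the pair $\ea$, $\ea+\varphi$), so your hard step can be discharged by citation, just as the paper does. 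As written, your argument leaves that step as a sketch rather than carrying it out, which is its only incompleteness; everything else is sound.
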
 

\begin{proof}
By Lemma \ref{theorem:ReductionPropertyInclusion} we see that $\ea + \{ Q^k_n(\varphi) \mid k\in \omega\} \ \subseteq \ \ea + \la n+1 \ra \varphi$. The $\Pi_{n+1}$-conservativity follows directly from the Reduction Property --Theorem \ref{theorem:ReductionProperty}-- and Lemma \ref{theorem:ReflectionRuleSimplification} above.
\end{proof}

The main ingredient of the proof of the Reduction Property is a cut-elimination argument. Thus, as was noted in previous papers, the theorem above --Theorem \ref{theorem:ReductionPropertyInGLP}-- is formalizable as soon as the superexponential function is provably total and in particular in $\ea^+$. From this fact we get a powerful result concerning provable equi-consistency (see e.g. \cite{BeklemishevSurvey:2005}):

\begin{theorem}\label{theorem:equiconsistencyReductionProperty}
For $m\leq n$ we have that $\ea^+ \vdash \la m\ra \la n+1 \ra \varphi \ \leftrightarrow \ \forall k\ \la m\ra Q^k_n (\varphi)$.
\end{theorem}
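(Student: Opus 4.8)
The plan is to prove the two directions of the biconditional separately, reading $\la m\ra\psi$ as the formalized consistency statement and $[m]\psi$ as its dual. The forward implication will come from Lemma~\ref{theorem:ReductionPropertyInclusion} together with the monotonicity of $\la m\ra$, whereas the backward implication is where the Reduction Property in the form of Theorem~\ref{theorem:ReductionPropertyInGLP} does the real work. The reason the base theory is $\ea^+$ rather than $\ea$ is exactly that the backward direction invokes the \emph{formalized} Reduction Property, whose proof rests on a cut-elimination argument and hence needs the totality of superexponentiation, as recorded in the remark preceding the statement.

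\emph{Forward direction.} I would first arithmetize Lemma~\ref{theorem:ReductionPropertyInclusion} uniformly in $k$. Presenting $Q^k_n(\varphi)$ through the elementary function $k\mapsto\ulcorner Q^k_n(\varphi)\urcorner$ evaluated against a $\Pi_{n+1}$ partial truth predicate, the implication $\la n+1\ra\varphi\to Q^k_n(\varphi)$ has an $\ea$-proof built elementarily from $k$, so $\ea\vdash\forall k\,\Box_\ea(\la n+1\ra\varphi\to Q^k_n(\varphi))$. Combining this with the provable, uniform normality of $[m]$, namely $\ea\vdash\forall k\,[\Box_\ea(\la n+1\ra\varphi\to Q^k_n(\varphi))\to(\la m\ra\la n+1\ra\varphi\to\la m\ra Q^k_n(\varphi))]$, modus ponens inside $\ea$ gives $\ea\vdash\forall k\,(\la m\ra\la n+1\ra\varphi\to\la m\ra Q^k_n(\varphi))$ and hence the forward implication (a fortiori in $\ea^+$). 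The care point here is to keep everything as a \emph{provable implication} under $\Box_\ea$, so that one never has to reflect $\Box_\ea\chi$ down to $\chi$, which $\ea^+$ cannot do, and so that one avoids induction on the high-complexity formula $\la n+1\ra\varphi\to Q^k_n(\varphi)$.

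\emph{Backward direction.} I would argue by contraposition inside $\ea^+$: assume $\neg\la m\ra\la n+1\ra\varphi$, that is $[m]\neg\la n+1\ra\varphi$, and produce a $k$ with $[m]\neg Q^k_n(\varphi)$. Unfolding $[m]$ yields a true $\Pi_m$ sentence $\theta$ with $\Box_\ea(\la n+1\ra\varphi\to\neg\theta)$, i.e. $\ea+\la n+1\ra\varphi\vdash\neg\theta$. Because $m\leq n$ we have $\neg\theta\in\Sigma_m\subseteq\Pi_{m+1}\subseteq\Pi_{n+1}$, so $\neg\theta$ falls within the scope of the Reduction Property. Applying the formalized Theorem~\ref{theorem:ReductionPropertyInGLP} converts the proof of $\neg\theta$ from $\ea+\la n+1\ra\varphi$ into a proof from $\ea+\{Q^k_n(\varphi)\mid k\in\omega\}$; since the $Q^k_n(\varphi)$ increase in strength and proofs are finite, this delivers a specific $k_0$ with $\Box_\ea(Q^{k_0}_n(\varphi)\to\neg\theta)$, equivalently $\Box_\ea(\theta\to\neg Q^{k_0}_n(\varphi))$. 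Finally, as $\theta$ is a true $\Pi_m$ sentence we have $[m]\theta$, and the normality of $[m]$ then yields $[m]\neg Q^{k_0}_n(\varphi)$, i.e. $\neg\la m\ra Q^{k_0}_n(\varphi)$, which is what contraposition requires.

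\emph{Main obstacle.} The crux is the extraction of the explicit witness $k_0$ in the backward direction. This is precisely the \emph{effective} content of the Reduction Property: the cut-elimination underlying Theorem~\ref{theorem:ReductionPropertyInGLP} turns a $\Pi_{n+1}$-proof from $\ea+\la n+1\ra\varphi$ into one from finitely many $Q^k_n(\varphi)$, with a bound on $k$ computable (superexponentially) from the input proof. It is the availability of this bound, together with the provability of the whole transformation, that forces the base theory up to $\ea^+$. Two bookkeeping points deserve attention: checking that $\neg\theta\in\Pi_{n+1}$, which is exactly where the hypothesis $m\leq n$ is used, and the reabsorption of the true $\Pi_m$ sentence $\theta$ back into the oracle of $[m]$.
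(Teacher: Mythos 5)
Your proposal is correct and follows essentially the same route as the paper's own proof: the forward direction via a formalized, $k$-uniform version of Lemma~\ref{theorem:ReductionPropertyInclusion}, and the backward direction by contraposition inside $\ea^+$, extracting the true $\Pi_m$ oracle sentence $\theta$, noting that $\neg\theta\in\Pi_{n+1}$ (the only place $m\leq n$ is used), applying the formalized Reduction Property of Theorem~\ref{theorem:ReductionPropertyInGLP} to obtain a (possibly non-standard) witness $k_0$, and re-absorbing $\theta$ into the $[m]$ oracle. Your complexity bookkeeping $\neg\theta\in\Sigma_m\subseteq\Pi_{m+1}\subseteq\Pi_{n+1}$ is in fact slightly more explicit than the paper's.
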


\begin{proof}
We reason in $\ea^+$ and prove the equivalence by contraposition. Lemma \ref{theorem:ReductionPropertyInclusion} is actually already provable in \ea so that we see 
\[
\exists k\ [m]_{Q^k_n(\varphi)}\bot \to [m]_{\la n+1 \ra \varphi} \bot .
\]
For the other direction we invoke the Reduction Property as stated in Theorem \ref{theorem:ReductionPropertyInGLP}. 

So, still reasoning in $\ea^+$, we suppose that $[m]_{\la n+1 \ra \varphi} \bot$. Let $\pi$ be the conjunction of $\Pi_m^0$ sentences that are used in the $\ea + \la n+1\ra \varphi$ proof of $\bot$. Thus, we get that $[0]_{\la n+1\ra \varphi} \neg \pi$. As $\neg \pi \in \Pi_{n+1}^0$ we get by the formalized reduction property that $[0]_{Q^k_n(\varphi)}\neg \pi$ for some (possibly non-standard) number $k$. The latter implies $[m]_{Q^k_n(\varphi)}\bot$ and we are done.
\end{proof}

\section{A $\Pi_1^0$-ordinal analysis for \pa}

The following theorem with proof can be found in full detail in \cite{BeklemishevSurvey:2005}. We present here the main part of the proof but refer for certain claims made here to \cite{BeklemishevSurvey:2005}.

\begin{theorem}\label{theorem:PaIsConsistent}
$\ea^+ + {\sf TI}[\Pi^0_1, \varepsilon_0] \vdash {\sf Con}(\pa)$
\end{theorem}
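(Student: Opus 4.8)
The plan is to reduce the consistency of \pa to a family of consistency statements about worms and then to discharge that whole family by a single transfinite induction along the worm ordering.

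First I would rewrite ${\sf Con}(\pa)$ in modal terms. Since $\pa=\bigcup_n\isig{n}$, any \pa-proof of $\bot$ uses only finitely many induction instances and hence already lives in some $\isig{n}$; thus $\ea\vdash{\sf Con}(\pa)\leftrightarrow\forall n\,{\sf Con}(\isig{n})$. Combining this with Lemma~\ref{theorem:ConsistencyEquivalentToReflectionEquivalentToInduction}, which identifies $\isig{n+1}$ with $\ea+\la n+2\ra\top$, yields the \ea-provable equivalence ${\sf Con}(\pa)\leftrightarrow\forall n\,\la 0\ra\la n\ra\top$. So it suffices to establish $\la 0\ra\la n\ra\top$ for every $n$, and to do so uniformly in $n$.

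The main step is to prove, by $\Pi^0_1$ transfinite induction along the well-order $(S^\omega,<_0)$, the statement $\forall A\in S^\omega\,\la 0\ra A$. This is legitimate for two reasons: $\la 0\ra A$ is uniformly a $\Pi^0_1$ formula in the code of the worm $A$, so $\Pi^0_1$ induction applies; and $(S^\omega,<_0)$ has order type $\varepsilon_0$, with $\la n\ra\top$ sitting at the tower ordinal $\omega_n$ (where $\omega_0=1$ and $\omega_{k+1}=\omega^{\omega_k}$), whose supremum is exactly $\varepsilon_0$ --- this is precisely why the induction must run up to $\varepsilon_0$ and no further. The engine of the inductive step is the \glp-form of the Reduction Property, Theorem~\ref{theorem:equiconsistencyReductionProperty}: a worm $A$ with positive head, say $A=\la n+1\ra\varphi$, satisfies (provably in $\ea^+$) $\la 0\ra A\leftrightarrow\forall k\,\la 0\ra Q^k_n(\varphi)$, and by Lemma~\ref{theorem:ReductionPropertyInclusion} together with the monotonicity axiom each $Q^k_n(\varphi)$ lies strictly $<_0$-below $A$. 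Hence the head modality can be peeled and traded for a transfinite iteration of strictly smaller worms, exactly the instances supplied by the induction hypothesis. Instantiating the conclusion at $A=\la n\ra\top$ for all $n$ then gives $\forall n\,\la 0\ra\la n\ra\top$, i.e.\ ${\sf Con}(\pa)$.

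Two points are where the real work lies. The first is that the entire peeling must be carried out inside $\ea^+$: the Reduction Property rests on a cut-elimination argument, and it is only formalizable once superexponentiation is provably total, which is exactly what $\ea^+$ adds over \ea; $\ea^+$ is likewise what supplies the base datum ${\sf Con}(\ea)=\la 0\ra\top$ via formalized partial cut-elimination. The second, and I expect the more delicate, obstacle is the inductive step at worms of low (in particular zero) head, where the reduction no longer peels anything and the naive induction hypothesis is too weak --- passing from $\la 0\ra B$ to $\la 0\ra\la 0\ra B$ is not a valid modal inference. Handling these cases cleanly calls for a reflexive form of the transfinite induction (using L\"ob's theorem to place the hypothesis under a provability operator) rather than the bare induction scheme, and it is here that the argument must be arranged with care so that $\Pi^0_1$ complexity is preserved throughout and the well-foundedness of $(S^\omega,<_0)$ is invoked in the form ${\sf TI}[\Pi^0_1,\varepsilon_0]$.
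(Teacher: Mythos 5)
Your decomposition is the paper's: reduce ${\sf Con}(\pa)$ to $\forall n\ \la 0 \ra \la n \ra \top$ via Lemma \ref{theorem:ConsistencyEquivalentToReflectionEquivalentToInduction}, prove $\forall A \,{\in}\, S^\omega \ \la 0 \ra A$ by $\Pi^0_1$ transfinite induction along $\la S^\omega, <_0 \ra$, and handle worms with head $\la n+1 \ra$ by Theorem \ref{theorem:equiconsistencyReductionProperty} together with Lemma \ref{theorem:ReductionPropertyInclusion} and monotonicity. All of that is correct and matches the paper. The genuine gap is exactly at the case you yourself flag as delicate, $A = \la 0 \ra B$, where your proposed repair --- replacing the bare induction scheme by reflexive (L\"ob-style) induction --- does not go through. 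Reflexive induction requires the induction hypothesis to be boxed by the provability predicate of the very theory $T$ in which one is reasoning; but then, at $A = \la 0 \ra B$, passing from $\Box_T \la 0 \ra B$ to $\la 0 \ra \la 0 \ra B = {\sf Con}(\ea + \la 0 \ra B)$ requires ${\sf Con}(T)$, which $T$ cannot prove by G\"odel's second theorem. If instead you box the hypothesis with $\Box_\ea$ (so that ${\sf Con}(\ea)$, which $\ea^+$ does prove, would suffice to unbox it), then L\"ob's theorem no longer underwrites the induction, since the proving theory and the boxing theory differ; and such a principle had better fail, because reflexive induction consumes no transfinite induction axioms at all, so it would yield $\ea^+ \vdash {\sf Con}(\pa)$ and hence $\ea^+ \vdash {\sf Con}(\ea^+)$, again contradicting G\"odel's second theorem.

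The paper's treatment of this case is not modal but arithmetical, and much simpler: $\ea^+ \vdash {\sf RFN}_{\Sigma_1^0}(\ea)$, and $[0]\neg B$ is a $\Sigma_1^0$ sentence, so $\ea^+$ proves $[0][0]\neg B \to [0]\neg B$, that is, $\la 0 \ra B \to \la 0 \ra \la 0 \ra B$. You are right that this implication is not a theorem of \glp, but it does not need to be: it holds uniformly in $B$, provably in $\ea^+$, for the particular arithmetical interpretation of $[0]$ at hand. With it, the ordinary induction hypothesis suffices --- $B <_0 A$ gives $\la 0 \ra B$, hence $\la 0 \ra \la 0 \ra B$ --- and the zero-head case closes with no strengthening of the induction scheme. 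The remainder of your argument can stand as written.
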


\begin{proof}
It is well-known that the equivalence between reflection, induction and consistency as stated in Lemma \ref{theorem:ConsistencyEquivalentToReflectionEquivalentToInduction} can actually be formalized in $\ea^+$. Thus, we reason in $\ea^+$ and observe that we have $\pa \subseteq \ea + \{  \la 1 \ra \top, \la 2\ra \top, \la 3 \ra \top, \la 4 \ra \top, \ldots \}$. Consequently, ${\sf Con}(\ea + \{  \la 1 \ra \top, \la 2\ra \top, \la 3 \ra \top, \la 4 \ra \top, \ldots \} )\to {\sf Con} (\pa)$ and we shall complete our proof by showing ${\sf Con}(\ea +  \{  \la 1 \ra \top, \la 2\ra \top, \la 3 \ra \top, \la 4 \ra \top, \ldots \})$. For this, it suffices to show
\begin{equation}\label{equation:ConsistencyOfConsistencies}
\forall n\  \la 0 \ra \la n \ra \top.
\end{equation}

We shall prove this by transfinite induction.
It is known that $\la S^\omega,<_0 \ra$ is provably in \ea isomorphic to $\la \varepsilon_0 , <\ra$.
Thus it suffices to perform a transfinite induction over the structure $\la S^\omega, <_0 \ra$. Clearly 
\begin{equation}\label{equation:ConsistencyOfWorms}
\forall \, A{\in}S^\omega\ \la 0\ra A
\end{equation}
implies \eqref{equation:ConsistencyOfConsistencies}, so we shall prove \eqref{equation:ConsistencyOfWorms} by transfinite induction over $\la S^\omega, <_0 \ra$. We set out to prove $\forall \, A{\in } S^\omega\ (\forall \, A'{<_0}A \ \la 0 \ra A' \to \la 0 \ra A )$ from which \eqref{equation:ConsistencyOfWorms} follows, and distinguish three cases:
\begin{enumerate}
\item
$A = \top$ in which case we have $\la 0 \ra \top$ as $\ea^+$ proves the consistency of \ea.

\item 
$A$ is of the form $\la 0 \ra B$ for some worm $B$.

It is well-known that $\ea^+ \vdash {\sf RFN}_{\Sigma_1^0}(\ea)$. So in particular, as $[0] B$ is a $\Sigma_1^0$-sentence, we get $[0] [0] B \to [0] B$. Thus also $\la 0 \ra B \to \la 0 \ra \la 0\ra B$. However, as $B<_0 A$ we have by the induction hypothesis that $\la 0 \ra B$ and we are done.

\item
$A$ is of the form $\la n+1 \ra B$ for some worm $B$ and natural number $n$.

So, we need to prove $\la 0\ra \la n+1 \ra B$. By Theorem \ref{theorem:equiconsistencyReductionProperty} we get that 
\[
\la 0\ra \la n+1 \ra B \leftrightarrow \forall k \ \la 0 \ra Q^k_n(B).
\]
However, as for each $k\in \omega$ we have by Lemma \ref{theorem:ReductionPropertyInclusion} that $Q^k_n(B) <_0 \la n+1\ra B$ we are done by the induction hypothesis.
\end{enumerate}
\end{proof}

On the basis of Theorem \ref{theorem:PaIsConsistent} one could decide to call $\varepsilon_0$ the proof-theoretical ordinal of \pa. Like many other ordinal analyses, the current analysis is susceptible to plugging in pathological ordinal notation systems so as to get way weaker or stronger proof-theoretical ordinals for \pa. However, we feel confident to judge ourselves which notation system is natural enough to use and which not.

We shall now briefly say why this particular ordinal is called the $\Pi_1^0$ ordinal of \pa. If we define Turing progressions $\ea^\alpha$ of \ea by transfinite induction in the standard way as $\ea^0 := \ea$, and $\ea^\alpha := \cup_{\beta<\alpha}(\ea_\beta + {\sf Con}(\ea_\beta))$ we can define a $\Pi^0_1$ proof theoretical ordinal based on these $\ea^\alpha$. For a target theory $T$ we define $|T|_{\Pi^0_1}$ --the $\Pi^0_1$ proof theoretical ordinal of $T$-- to be the smallest $\alpha$ for which $\ea^\alpha$ comprises all the $\Pi^0_1$ consequences of $T$.

For natural theories $T$ and natural ordinal notation systems, this ordinal will coincide with the ordinal obtained by an analysis presented in Theorem \ref{theorem:PaIsConsistent}. Moreover for $T=\pa$ and various sub-systems $T$ of \pa, it is known that $|T|_{\Pi^0_1}$ coincides with all the other known ordinal analyses like $|T|_{\Pi^1_1}$ or $|T|_{\Pi^0_2}$. 

We mention these other proof-theoretical ordinals here without further detail and just to provide some context. In this same spirit it is worth mentioning that $|T|_{\Pi^0_1}$ is more fine-grained than any of the others. For example, $|\pa + {\sf Con}(\pa)|_{\Pi^1_1} = |\pa + {\sf Con}(\pa)|_{\Pi^0_2} = |\pa|_{\Pi^0_2}= \varepsilon_0$ whereas $|\pa + {\sf Con}(\pa)|_{\Pi_1^0} = \varepsilon_0 \cdot 2$.

\section{Ingredients for going beyond \pa}

The paradigm for $\Pi^0_1$ is nice in that it provides a more fine-grained analysis than all other ordinal analyses around. In a sense, it provides the finest analysis possible as different true theories will at least differ on $\Pi^0_1$ sentences. A critique to the paradigm is that the analysis has so far only been performed for rather weak mathematical theories: \pa and its kin.

If we wish to address stronger theories than \pa there are two paths that one can take. In the next subsection we discuss one such path where the base theory is strengthened. In the remaining subsection we speak about the approach where we strengthen $\glp_\omega$ to $\glp_\Lambda$ with $\Lambda > \omega$. 

\subsection{Relative $\Pi^0_1$ ordinal analysis}
We can choose to stay within $\glp_\omega$ and strengthen our base theory \theory{X}. So, if we wish to analyze some target theory \theory{U} with the $\Pi_1^0$ paradigm relative to \theory{X}, the question translates to how often one should iterate the Turing progression based on \theory{X} to comprise all the $\Pi^0_1$ consequences of \theory{U}. In the next section we shall analyze this in further detail.

\subsection{Beyond $\glp_\omega$}
Another choice to strengthen the applicability of the paradigm is to use modal provability logics that go beyond $\glp_\omega$. Currently most efforts of taking the paradigm further are along these lines. There are two main aspects involved here. The first is to extend the modal theory of \glp beyond $\glp_\omega$ and the other is to find suitable (hyper)arithmetical interpretations of the modalities $[\alpha]$ involved.

\subsubsection{The modal theory}
By now, the modal theory of $\glp_\Lambda$ is rather well studied and understood. A first and seminal step in this direction was taken by Beklemishev in \cite{Beklemishev:2005}. In particular, the paper focussed on the closed fragment $\glp^0$ of \glp and studied the worms in there. It was shown that  the orderings $<_0$ are well behaved also in the class-size $\glp^0$ and define a well order provided the irreflexivity of $<_0$. 

The irreflexivity of $<_0$ has been shown both in \cite{BeklemshevFernandezJoosten2011} and \cite{FernandezJoostenModels2012}. In particular \cite{FernandezJoostenModels2012} provides a class-size universal model for $\glp^0$. The ordering $<_0$ and natural and important generalizations are now well studied and understood as presented in \cite{Beklemishev:2005, FernandezJoostenWellOrders2012, FernandezJoostenWellFoundedPartialOrders2012}.

Although there are various important and interesting questions open in the modal theory of the logics $\glp_\Lambda$ it seems that all modal theory is in place to move the $\Pi^0_1$ ordinal analysis beyond \pa.

\subsubsection{Hyperarithmetic interpretations and the Reduction Property}

Currently the aim the \glp project is to provide an ordinal analysis of predicative analysis whose classical proof-theoretical ordinal is the Feferman-Sch\"utte ordinal $\Gamma_0$. Various natural candidates of provability notions have been seen to be sound and complete for $\glp_{\Gamma_0}$. However, so far, for none of this interpretations a natural generalization of the Reduction Property has been established.  

In the final section of this paper we shall briefly mention some of these generalized provability notions. In the next section we shall see how the need of a full Reduction Property can be circumvented.

\section{Reduction Property, equi-consistency, and relative ordinal analysis}

In this section we shall see how we can minimize the ingredients needed for a consistency proof as presented in Theorem \ref{theorem:PaIsConsistent}. In particular we shall not need the full Reduction Property but rather some weak version of it in terms of equi-consistency. 

We shall see that the following steps suffice. Below, let \theory{U} denote the target theory of which we wish to perform an ordinal analysis. 

\begin{enumerate}
\item
We fix some base theory \theory{X} over which most of our arguments will be performed;

\item\label{item:suitableConsistencyStatements}
We find some notions of consistency over \theory{X} of increasing strength 
\[
\{   \la 0 \ra_{\theory{X}} \varphi,    \la 1 \ra_{\theory{X}} \varphi,   \la 2 \ra_{\theory{X}} \varphi,   \la 3 \ra_{\theory{X}} \varphi, \ldots \},
\]
so that the following properties are obtained (we shall drop subscripts \theory{X})
\begin{enumerate}
\item\label{item:provableDeductionTheorem}
The notion $\la n \ra_{\theory{T}}$ grows monotone both in $n$ and in $\theory{T}$ and for all natural numbers $n$, theories \theory{T}, and formulas $\varphi, \psi$ we have that provably in some weak theory but certainly in \theory{X}
\[
\la n \ra_{\theory{T} + \varphi} \psi \ \leftrightarrow \ \la n \ra_{\theory{T} } (\psi \wedge  \varphi);
\]

\item\label{item:GLPisSound}
The logic \glp is sound for the corresponding dual provability operators $[n]_{\theory{X}}$;

\item\label{item:UincludedInConsistencies}
We have that (provably in some weak theory but certainly in \theory{X}) 
\[
\mathcal{U} \subseteq \theory{X} + \{   \la 0 \ra_{\theory{X}} \top,    \la 1 \ra_{\theory{X}} \top,   \la 2 \ra_{\theory{X}} \top,   \la 3 \ra_{\theory{X}} \top, \ldots \};
\]

\item\label{item:equiconsistencyReductionProperty}
The theory $\theory{X} + \la n+1 \ra \top$ is equi-consistent with the theory 
$\theory{X} + \{ Q^k_n(\top) \mid k \in \omega \}$ where $Q^0_n(\varphi)= \la n \ra \varphi$ and $Q^{k+1}_n (\varphi) = \la n \ra (\varphi \wedge Q^k_n(\varphi))$. This equi-consistency should be provable in some weak extension $\theory{X}^+$ of \theory{X}.

\end{enumerate}

\end{enumerate}

We shall now see that these ingredients suffice to perform a consistency proof of \theory{U} relative to \theory{X} formalized in $\theory{X}^+$.

\begin{theorem}\label{theorem:strippedConsistencyProof}
Suppose we have fixed \theory{X} and consistency notions as above. Then 
\[
\theory{X}^+ + {\sf TI}(\widetilde{\Pi^0_1}, \varepsilon_0) \vdash {\sf Con}(\theory{U}),
\]
where $\widetilde{\Pi^0_1}$ is some complexity class that corresponds to the consistency notion $\la 0 \ra_\theory{X}$.
\end{theorem}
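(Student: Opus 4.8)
The plan is to replay the proof of Theorem~\ref{theorem:PaIsConsistent} verbatim in the abstract setting, using hypotheses (\ref{item:provableDeductionTheorem})--(\ref{item:equiconsistencyReductionProperty}) in place of the concrete facts about \ea. I would reason throughout inside $\theory{X}^+$. By property~(\ref{item:UincludedInConsistencies}) we have, provably, $\theory{U}\subseteq\theory{X}+\{\la n\ra_{\theory{X}}\top\mid n\in\omega\}$, so it suffices to establish ${\sf Con}(\theory{X}+\{\la n\ra\top\mid n\in\omega\})$. Since the $\la n\ra\top$ are increasing in strength (monotonicity in property~(\ref{item:provableDeductionTheorem})), consistency of this theory reduces to $\forall n\,\la 0\ra\la n\ra\top$, which is in turn implied by $\forall A{\in}S^\omega\,\la 0\ra A$. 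As $\la S^\omega,<_0\ra$ is provably isomorphic to $\la\varepsilon_0,<\ra$, I would prove this last statement by the single transfinite induction ${\sf TI}(\widetilde{\Pi^0_1},\varepsilon_0)$ that the theorem supplies. It is essential here that the induction formula $\la 0\ra A$ has complexity $\widetilde{\Pi^0_1}$, the class attached to $\la 0\ra_{\theory{X}}$; this is exactly what fixes the choice of induction in the statement.

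The induction step splits into the same three cases as in Theorem~\ref{theorem:PaIsConsistent}. For $A=\top$ the goal $\la 0\ra\top$ is just ${\sf Con}(\theory{X})$, which I assume $\theory{X}^+$ proves (the analogue of ``$\ea^+$ proves the consistency of \ea''). For $A=\la 0\ra B$ I would use that $[0]B$ lies in the dual class $\widetilde{\Sigma^0_1}$, so that $\widetilde{\Sigma^0_1}$-reflection for $\theory{X}$ over $\theory{X}^+$ (the natural analogue of $\ea^+\vdash{\sf RFN}_{\Sigma^0_1}(\ea)$) gives $[0][0]B\to[0]B$, hence $\la 0\ra B\to\la 0\ra\la 0\ra B$; combined with the induction hypothesis $\la 0\ra B$ (valid as $B<_0 A$) this yields $\la 0\ra A$. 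For $A=\la n+1\ra B$ the plan is to combine Lemma~\ref{theorem:ReductionPropertyInclusion}, which gives $Q^k_n(B)<_0\la n+1\ra B=A$ for every $k$ and hence (by the induction hypothesis) $\forall k\,\la 0\ra Q^k_n(B)$, with the equi-consistency of property~(\ref{item:equiconsistencyReductionProperty}), which should turn this into $\la 0\ra\la n+1\ra B$. Soundness of \glp (property~(\ref{item:GLPisSound})) underwrites all the modal manipulations.

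The hard part will be this last case, since property~(\ref{item:equiconsistencyReductionProperty}) only asserts the equi-consistency of $\theory{X}+\la n+1\ra\top$ with $\theory{X}+\{Q^k_n(\top)\}$, i.e.\ for $\top$, whereas I need it for an arbitrary worm $B$. My proposal is to relativize the base theory: apply property~(\ref{item:equiconsistencyReductionProperty}) with $\theory{X}$ replaced by $\theory{X}+B$ and use the formalized deduction theorem~(\ref{item:provableDeductionTheorem}) to check, by induction on $k$, that $Q^k_n(\top)$ computed over $\theory{X}+B$ equals $Q^k_n(B)$ computed over $\theory{X}$, so that the relativized equi-consistency reads
\[
\la 0\ra_{\theory{X}}(\la n+1\ra B\wedge B)\ \leftrightarrow\ \forall k\,\la 0\ra_{\theory{X}}(Q^k_n(B)\wedge B).
\]
The direction $\la 0\ra(\la n+1\ra B\wedge B)\to\la 0\ra\la n+1\ra B$ is then free by weakening, but the genuine obstacle is to pass from the induction hypothesis $\la 0\ra Q^k_n(B)$ to $\la 0\ra(Q^k_n(B)\wedge B)$. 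When the leading modality $m$ of $B=\la m\ra C$ satisfies $m\le n$ this is immediate, since \glp proves $Q^k_n(B)\to B$; in the remaining case $m>n$ the \glp axioms relating the modalities still let one rewrite $Q^k_n(B)\wedge B$ as the single consistency statement $\la m\ra(C\wedge Q^k_n(B))$, and I would argue that its consistency is again delivered by the induction hypothesis. Carrying out these \glp-equivalences and, above all, verifying that the entire argument --- including the $\widetilde{\Pi^0_1}$ complexity accounting --- remains formalizable in the weak theory $\theory{X}^+$ is where the real work lies. This is precisely the step that the full Reduction Property would handle uniformly, and the content of the theorem is that the weaker equi-consistency~(\ref{item:equiconsistencyReductionProperty}) suffices in its place.
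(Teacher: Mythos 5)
Your proposal is correct and its skeleton is exactly the paper's: the same reduction of ${\sf Con}(\theory{U})$ to $\forall n\, \la 0 \ra \la n \ra \top$ via properties (\ref{item:UincludedInConsistencies}) and (\ref{item:provableDeductionTheorem}), the same transfinite induction over $\la S^\omega, <_0\ra$ with induction formula $\la 0 \ra A$ of complexity $\widetilde{\Pi^0_1}$, the same three cases, and the same use of ${\sf RFN}_{\widetilde{\Sigma_1^0}}(\theory{X})$ for the empty worm and worms headed by $\la 0 \ra$. The one point where you genuinely diverge is the successor case, and it is worth comparing. The paper simply applies hypothesis (\ref{item:equiconsistencyReductionProperty}) to the arbitrary worm $A'$, i.e.\ it tacitly reads that hypothesis as a schema in $\varphi$ (in line with Theorem \ref{theorem:equiconsistencyReductionProperty}, which is stated for arbitrary $\varphi$), and then concludes by Lemma \ref{theorem:ReductionPropertyInclusion}, soundness, and the induction hypothesis. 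You instead keep the literal $\top$-instance and relativize the base theory to $\theory{X}+B$, using the deduction theorem (\ref{item:provableDeductionTheorem}) to identify $Q^k_n(\top)$ computed over $\theory{X}+B$ with $Q^k_n(B)$ computed over $\theory{X}$; that computation is right, and your subsequent \glp manipulations are sound: $Q^k_n(B) \to B$ when the leading modality $m$ of $B$ is $\leq n$, and $B \wedge Q^k_n(B) \leftrightarrow \la m \ra (C \wedge Q^k_n(B))$ when $m > n$, the latter being $<_0$-below $\la n+1 \ra B$ because $\glp \vdash \la n+1\ra B \to \la n \ra(B \wedge Q^k_n(B))$. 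Be aware, though, that your fix does not actually stay within the letter of the hypotheses either: (\ref{item:equiconsistencyReductionProperty}) is an assumption about the single theory $\theory{X}$, so invoking it for all the bases $\theory{X}+B$, uniformly in $B$ inside the induction, is a strengthening of the stated hypothesis exactly parallel to the paper's tacit schema reading. So the two routes demand the same uniformity of (\ref{item:equiconsistencyReductionProperty}); the paper's reading gets the successor case in one line, while yours costs some extra worm calculus but has the merit of exposing a real imprecision in how the hypothesis is formulated --- your closing paragraph identifies precisely the point that the paper glosses over.
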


\begin{proof}
The proof is similar to that of Theorem \ref{theorem:PaIsConsistent}. 
We reason in $\theory{X}^+$.
By \ref{item:UincludedInConsistencies} above, we have that $\theory{U} \subseteq \theory{X} + \{   \la 0 \ra_{\theory{X}} \top,    \la 1 \ra_{\theory{X}} \top,   \la 2 \ra_{\theory{X}} \top,   \la 3 \ra_{\theory{X}} \top, \ldots \}$ whence also
\[
\la 0 \ra_{\{   \la 0 \ra_{\theory{X}} \top,    \la 1 \ra_{\theory{X}} \top,   \la 2 \ra_{\theory{X}} \top,   \la 3 \ra_{\theory{X}} \top, \ldots \}}\top \to \la 0 \ra_\zfc \top.
\]
Clearly, by \ref{item:provableDeductionTheorem} we have that
\[
\la 0 \ra_{\{   \la 0 \ra_{\theory{X}} \top,    \la 1 \ra_{\theory{X}} \top,   \la 2 \ra_{\theory{X}} \top,   \la 3 \ra_{\theory{X}} \top, \ldots \}}\top \ \ \leftrightarrow \ \ \forall \, n \ \la 0 \ra \la n\ra \top.
\]
We now reason inside some weak extension $\theory{X}^+$ of \theory{X} and conclude by using transfinite induction and showing that $ \forall \, n \ \la 0 \ra_\theory{X} \la n\ra_\theory{X} \top$. Clearly it suffices to show that for all worms $A$ in $\glp_\omega$ we have that $\la 0\ra_\theory{X} A$. Thus, we set out to prove 
\begin{equation}\label{equation:progressive}
\forall  A\ [\forall \, B {<_0} A\  \la 0\ra_\theory{X} B \to \la 0\ra_\theory{X} A].
\end{equation}
We choose $\theory{X}^+$ strong enough so that it at least contains ${\sf RFN}_{\widetilde{\Sigma_1^0}}(\theory{X})$ in order to have 
\begin{enumerate}
\item
$\theory{X}^+ \vdash \la 0\ra_\theory{X} \top$ and,

\item
$\theory{X}^+ \vdash \la 0\ra_\theory{X} \varphi \ \to \ \la 0\ra_\theory{X} \la 0\ra_\theory{X} \varphi$.
\end{enumerate}
These two observations account for a proof of \eqref{equation:progressive} for the empty worm and worms of the form $\la 0 \ra A'$. For worms of the form $\la n+1 \ra A'$ we see by \ref{item:equiconsistencyReductionProperty} that 
\[
\la 0 \ra_\theory{X} \la n+1 \ra_\theory{X} A' \ \leftrightarrow \ \forall k \la 0 \ra_\theory{X} \la n \ra_\theory{X} Q^k_n(A').
\]
But, as by \ref{item:GLPisSound}, \glp is sound for our modalities, we get that all the $Q^k_n(A')$ are $<_0$-below $\la n+1 \ra \top$ and we have the right-hand side from the induction hypothesis.
\end{proof}
For the sake of presentation we have chosen \theory{X} and \theory{U} such that in some sense 
$\frac{\ea}{\pa} = \frac{\theory{X}}{\theory{U}}$ in which case we would be justified to say that the $\Pi^0_1$-proof theoretic ordinal of \theory{U} \emph{relative to} \theory{X} is $\varepsilon_0$. 

It is clear that Theorem \ref{theorem:strippedConsistencyProof} above can be extended to larger orderings once we have extended our notion of fundamental sequence as in Definition \ref{definition:FundamentalSequence} also for modalities with limit ordinals. This is unproblematic in principle but may slightly depend on the choice of fundamental sequences of the ordinals inside the modalities. The important observation is that the use of the full Reduction Property can be avoided. 

\section{Going beyond \pa: recent developments}

In this final section we just wish to briefly report on ongoing work to find arithmetical interpretations for $\glp_\Lambda$ with $\Lambda>\omega$.

\subsection{Truth-predicates and reflection}

Lev Beklemishev and Evgeniy Dashkov --both at Moscow State University-- have been studying interpretations of $\glp_{\omega\cdot 2 }$ where an additional truth-predicate for arithmetical formulas is added to the language of arithmetic. Within this framework they can express reflection for arithmetical formulas and slightly beyond. 

The work is still unpublished but they have presented some results where they go up to $[\omega + \omega]$ while preserving the full Reduction Property at the price of giving up the nice modal logic \glp. Rather they switch to a positive fragment of \glp to account for the fact that certain reflection principles (at limit stages) are not finitely axiomatizable.

\subsection{Omega rule interpretations}

Andr\'es Cord\'on Franco, David Fern\'andez Duque, and F\'elix Lara Mart\'{\i}n from the University of Sevilla, in collaboration with the author are studying an interpretation where, within an infinitary proof calculus $[\alpha]$ is read as ``provable with $\alpha$ nested applications of the omega rule". Soundness w.r.t.\ this interpretation has been proved and completeness seems feasible too. However, not much is known to what extend the Reduction Property holds for this interpretation.

\subsection{Levy's reflection results}

Joan Bagaria from ICREA and the University of Barcelona suggested in discussions with the author the following set-theoretical reading of our modalities $[n]$ for $n\in \omega$.
Let \theory{X} be the theory $\zfc - \{ {\sf Repl} + {\sf Inf} \}$. It is established in a paper from Levy (\cite{Levy:1961}) that 
\[
\zfc \equiv \theory{X} + \RFN(\theory{X}).
\]
Here, \RFN refers to the following notion of reflection: For each (externally quantified) natural number $n$, we denote by ${\sf RFN}_{\Sigma_n}(\theory{X})$ the following principle
\[
\forall \, \varphi{\in}\Sigma_n\, \forall a\, \exists\, \alpha {\in}{\sf On} \ [ V_\alpha \models \varphi (a) \ \Leftrightarrow \ \ \models_n \varphi (a) ].
\]
Here, $\models_n$ refers to partial truth predicates that are known to exist for \zfc and subtheories. At first sight it seems that replacement is needed to define the entities $V_\alpha$. However, in the absence of replacement one can work with the Scott-rank instead and define $V_\alpha : = \{  x\mid {\sf rank}(x)\leq \alpha \}$ where ${\sf rank}(x)\leq \alpha$ \emph{is} definable in \theory{X} making use of the transitive closure. We now define classes that collect the ordinals $\alpha$ for which the partial universes $V_\alpha$ are $\Sigma_n$ elementary substructures of $V$:
\[
C^{(n)} := \{ \alpha \mid V_\alpha \prec_{\Sigma_n} V\}.
\]
It is a theorem by Levy that the classes $C^{(n)}$ are $\Pi_n$ definable in \theory{X}.
Next, we define 
\[
\la n \ra_\theory{T} \varphi \ :\Leftrightarrow \ \exists\,  \alpha {\in} C^{(n)} \ [V_\alpha \models \theory{T} \wedge V_\alpha \models \varphi]
\]
It seems that all 2 (a)--(c) are satisfied for this notion of provability. In particular we have that $\la n \ra \varphi \to [m] \la n\ra \varphi$ since $\la n \ra_\theory{T}$ is definable in a $\Sigma_{n+1}$-fashion. As we cannot obtain that 
\[
\zfc \nvdash {\sf TI}({\Pi^0_1}, \varepsilon_0)
\]
we must conclude that $(d)$ does not hold and that the two theories are not equi-consistent.

\subsection{On a (relative) proof-theoretical ordinal of \zfc}

We conclude by a simple observation on a proof theoretical ordinal of \zfc. It is generally believed that an ordinal analysis for \zfc is currently way out of reach. With the methods presented here one might hope that at least an ordinal analysis relative to some strong base theory of \zfc might be possible. 

However, if such an analysis were to be given, it is most likely to be formalizable within \zfc itself. As \zfc proves transfinite induction over any well-ordering, this implies that  the order type involved in such an ordinal analysis of \zfc must be represented inside \zfc in such a way that \zfc does not prove it is indeed a well-order. 

\bibliographystyle{plain}
\bibliography{Biblio}

\end{document}